\theoremstyle{plain}
\newtheorem{thm}{Theorem}
\newtheorem{cor}{Corollary}
\newtheorem{lem}{Lemma}
\newtheorem{prop}{Proposition}
\newtheorem{conjec}{Conjecture}
\newtheorem{defn}{Definition}
\newtheorem{prob}{Problem}
\begin{document}

\begin{center}
\Large \textbf{A contribution to the second neighborhood problem}
\end{center}
\begin{center}
Salman GHAZAL\footnote{Department of Mathematics, Faculty of Sciences I, Lebanese University, Hadath, Beirut, Lebanon.\\
                       E-mail: salmanghazal@hotmail.com\\
                       Institute Camille Jordan, Département de Mathématiques, Universit\'{e} Claude Bernard Lyon 1, France.\\
                       E-mail: salman.ghazal@math.univ-lyon1.fr
                       
                       }
\end{center}
\vskip1cm
\begin{abstract}
 Seymour's Second Neighborhood Conjecture asserts that every digraph (without digons) has a vertex whose first out-neighborhood
is at most as large as its second out-neighborhood. It is proved for tournaments, tournaments missing a matching and tournaments
missing a generalized star. We prove this conjecture for classes of digraphs whose missing graph is a comb, a complete
graph minus 2 independent edges, or a complete graph minus the edges of a cycle of length 5.
\end{abstract}

\begin{section}{Introduction}

\par \hskip0.6cm In this paper, graphs are finite and simple. Directed graphs (digraphs) are orientations of graphs, so they do not contain loops,
parallel arcs, or digons (directed cycles of length 2). Let $G=(V,E)$ be a graph. The neighborhood of a vertex $v$ in $G$ is denoted by $N_G(v)$ 
and its degree is $d_G(v)=|N_G(v)|$. For $A\subseteq V$, $N_G(A)$ denotes the set of negihbors outside $A$ of the elements of $A$.
Let $D=(V,E)$ denote a digraph with vertex set $V$ and arc set $E$. As usual, $N^{+}_D(v)$ (resp. $N^{-}_D(v)$) 
denotes the (first) out-neighborhood (resp. in-neighborhood) of a vertex $v\in V$. $N^{++}_D(v)$ (resp. $N^{--}_D(v)$) denotes the second 
out-neighborhood (in-neighborhood) of $v$, which is the set of vertices that are at distance 2 from $v$ (resp. to $v$). We also denote 
$d^{+}_D(v)=|N^{+}_D(v)|$, $d^{++}_D(v)=|N^{++}_D(v)|$, $d^{-}_D(v)=|N^{-}_D(v)|$ and $d^{--}_D(v)=|N^{--}_D(v)|$. We omit the subscript if the
digraph (resp. graph) is clear from the context. For short, we write $x\rightarrow y$ if the arc $(x,y)\in E$. 
We say that a vertex $v$ has the \emph{second neigborhood property} (SNP) if $d^{+}(v)\leq d^{++}(v)$.\\

\par In 1990, P. Seymour conjectured \cite{dean} the following statement: 

\begin{conjec}
 \textbf{(The Second Neighborhood Conjecture (SNC))}\\ Every digraph has a vertex with the SNP.
\end{conjec}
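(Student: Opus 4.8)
The plan is to adopt the \emph{missing graph} viewpoint that underlies the special cases announced in the abstract. Regard $D$ as an orientation of $K_n$ from which the edges of a graph $G$ have been deleted, where $G$ records exactly the non-adjacent pairs of $D$; since $D$ has no digons, $G$ is a genuine simple graph capturing all the "holes''. The strategy is then threefold: complete $D$ to a tournament $T$ by orienting every missing edge, exploit the fact that the SNC is \emph{known} for tournaments (Dean's conjecture, proved by Fisher and given a combinatorial proof by Havet and Thomass\'e), and finally correct for the edges of $G$. I would build on the median-order proof, since it is the most flexible for the correction step. In full generality the statement is of course open, so what follows is really a plan that succeeds for structured $G$ and isolates where generality breaks down.

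First I would recall median orders. A median order of $T$ is an enumeration $v_1,\dots,v_n$ maximizing the number of forward arcs $v_i\to v_j$ with $i<j$. Two local properties are used repeatedly: (i) the restriction to any interval $\{v_i,\dots,v_j\}$ is again a median order of the induced subtournament, and (ii) within any interval the first vertex beats at least as many vertices of that interval as beat it (dually for the last vertex). The key consequence, which I would re-derive, is that the sink $v_n$ satisfies $d^+(v_n)\le d^{++}(v_n)$ in a tournament: one partitions $V\setminus\{v_n\}$ into $N^+(v_n)$ and $N^-(v_n)$ and uses the feedback property (ii) to inject $N^+(v_n)$ into $N^{++}(v_n)$.

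The heart of the plan is the transfer to $D$. After fixing a median order of a completion $T$, take the candidate $v=v_n$ and compare its neighborhoods in $D$ and in $T$. Deleting the missing edges incident to $v$ removes some vertices from $N^+_T(v)$, which helps the inequality, but it can also strip vertices out of $N^{++}_T(v)$ when their only length-two path from $v$ used a missing edge, which hurts. The task is to show that, for the missing graph at hand, the loss in the second neighborhood is dominated by the gain in the first. For a matching one corrects a single edge; for a comb, or $K_n$ minus two independent edges, or $K_n$ minus a $5$-cycle, I would orient the edges of $G$ so as to respect the median order as far as possible, and then run a local exchange argument — swapping the roles of the endpoints of a troublesome missing edge, or passing to an alternative vertex supplied by property (ii) — to restore $d^+_D\le d^{++}_D$. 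The bounded, highly symmetric shape of these graphs $G$ is precisely what keeps this a finite case analysis.

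The main obstacle, and the reason the statement is still a conjecture, is that for an arbitrary missing graph there is no control whatsoever over how many second out-neighbors of the sink are created or destroyed by the missing edges, so the local exchange argument has nothing to bound it against. I therefore expect the median-order route to reach only the structured classes above. For the general case I would instead pursue a global averaging scheme in the spirit of Fisher's probabilistic proof: place a probability distribution on the vertices and try to show that the expectation of $d^{++}(v)-d^{+}(v)$ is nonnegative, forcing some vertex to have the SNP. Producing such a distribution for every digraph — equivalently, exhibiting a potential function insensitive to the missing edges — is exactly the difficulty that remains unresolved.
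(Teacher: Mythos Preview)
The statement is an open conjecture, and the paper does not prove it; it only establishes the special cases where the missing graph is a comb, a $\tilde{K}^4$, or a $\tilde{K}^5$. You correctly flag this, so there is no ``proof'' to compare against for the full statement. What can be compared is your plan for the structured cases against the paper's actual machinery, and here there is a genuine missing idea.

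Your plan says to ``orient the edges of $G$ so as to respect the median order as far as possible'' and then repair via local exchanges. The paper does \emph{not} choose the orientations with reference to the median order. Instead it introduces a \emph{dependency digraph} $\Delta$ on the missing edges: an arc $ab\to cd$ records that $a\to c$, $b\to d$, but $d\notin N^+(a)\cup N^{++}(a)$ and $c\notin N^+(b)\cup N^{++}(b)$. A missing edge with in-degree zero in $\Delta$ is \emph{good} and admits a \emph{convenient orientation} $(a,b)$, meaning every in-neighbor of $a$ already sees $b$ in its first or second out-neighborhood in $D$. The paper first proves that for the missing graphs at hand $\Delta$ is a disjoint union of directed paths, then orients each missing edge by propagating the convenient orientation of the source of its path in alternating fashion. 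Only \emph{after} this is a local median order taken of the resulting tournament, and the feed vertex is shown to have the SNP in $D$ by reorienting towards it the missing edges whose orientation was arbitrary.

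The point you are missing is that ``convenient'' orientations are chosen so that the added arc $(a,b)$ creates no spurious second out-neighbor for any vertex: if $f\to a\to b$ in the completion, then already $b\in N^+_D(f)\cup N^{++}_D(f)$. This is what makes the transfer from $T$ back to $D$ lossless for whole vertices and nearly lossless for non-whole ones, and it is a property of $D$ alone, independent of any median order. Without the dependency digraph you have no principled way to pick the orientations, and your ``respect the median order'' heuristic gives no control over which second out-neighbors of the feed vertex survive in $D$.
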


Seymour's conjecture restricted to tournaments is known as Dean's conjecture \cite{dean}. In 1996, Fisher \cite{fisher}
proved Dean's conjecture, thus asserting the SNC for tournaments. Another proof of Dean's conjecture was given by Thomassé
and Havet \cite{m.o.}, in 2000, using a tool called (local) median order. In 2007, Fidler and Yuster used also median orders to 
prove SNC for tournaments missing a matching. Ghazal proved the weighted version of SNC for tournaments missing a generalized star \cite{a}.\\

\par A median order $L=v_1v_2...v_n$ of a digraph $D$ is an order of the vertices of $D$ the maximizes
the size of the set of forward arcs of $D$, i.e., the set $\{(v_i,v_j)\in E; i<j\}$.
In fact, $L$ satisfies the feedback property: For all $1\leq i\leq j\leq n:$
$$ d^{+}_{[i,j]}(v_i)  \geq  d^{-}_{[i,j]}(v_i) $$
and 
$$ d^{-}_{[i,j]}(v_j) \geq  \omega d^{+}_{[i,j]}(v_j)  $$
where $[i,j]:=D[v_i,v_{i+1}, ...,v_j]$.\\
An order $L=v_1v_2...v_n$ satisfying the feedback property is called a local median order . The last vertex $v_n$
of a weighted local median order $L=v_1v_2...v_n$ of $D$ is called a \emph{feed} vertex of the digraph 
$D$ \cite{m.o.}.

\begin{thm}\cite{m.o.}
 Every feed vertex of a tournament has the SNP.
\end{thm}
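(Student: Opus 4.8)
The plan is to fix the feed vertex $v:=v_n$, set $A=N^{-}(v)$ and $B=N^{+}(v)$, and prove the SNP in the form $|B|\le d^{++}(v)$. The first reduction is to notice that every second out-neighbour of $v$ lies in $A$: a vertex at distance $2$ from $v$ is neither $v$ nor an out-neighbour, hence in a tournament it is an in-neighbour. Writing $C\subseteq A$ for the set of in-neighbours that beat \emph{every} out-neighbour of $v$ (equivalently, the in-neighbours that receive no arc from $B$ and so are not reached in two steps), one has $N^{++}(v)=A\setminus C$. Thus the whole statement reduces to the single counting inequality $|C|\le |A|-|B|$.

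Next I would extract from the feedback property (with $\omega=1$) the three facts that carry the argument. Taking $j=i+1$ in the first inequality shows that consecutive vertices always form a forward arc $v_i\to v_{i+1}$. Taking $j=n$ in the second inequality gives the ballot condition $d^{-}_{[i,n]}(v)\ge d^{+}_{[i,n]}(v)$ for all $i$: every suffix of $v_1\ldots v_{n-1}$ contains at least as many in-neighbours as out-neighbours (in particular $|A|\ge|B|$). Taking $i=1$ gives the \emph{endpoint domination} $d^{-}_{[1,m]}(v_m)\ge d^{+}_{[1,m]}(v_m)$ for every $m$. I would specialise the last fact to a dominating in-neighbour $c=v_m\in C$: since $c$ beats all out-neighbours, in particular the $b_c$ of them preceding it, we get $d^{+}_{[1,m]}(v_m)\ge b_c$, whence $d^{-}_{[1,m]}(v_m)\ge b_c$; as no out-neighbour beats $c$, these $b_c$ vertices beating $c$ are \emph{in}-neighbours. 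So each $c\in C$ is preceded by at least $b_c$ in-neighbours that beat it, where $b_c$ is the number of out-neighbours to its left.

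The strategy is then to exhibit $|B|$ distinct members of $A\setminus C$, or equivalently to bound $|C|$. The ballot condition lets me pair each out-neighbour $b$ with a distinct in-neighbour $\lambda(b)$ to its right, non-crossingly (the standard bracket matching; existence is exactly the suffix inequalities). When a pair $(b,\lambda(b))$ is adjacent, the consecutive-arc fact gives $b\to\lambda(b)$ for free, so $\lambda(b)\in N^{++}(v)$; for a non-crossing pair $(v_i,v_j)$ with the arc reversed I would invoke the feedback inequalities on the interval $[i,j]$ and the endpoint domination at $v_j$ to reassign the match to an in-neighbour of $[i,j]$ that genuinely receives an arc from an out-neighbour, keeping the assignment injective. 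Read through the lens of $C$, this is the statement that each $c\in C$ must sit far enough to the right to accommodate the $b_c$ in-neighbours beating it; charging the $|B|$ out-neighbours against these forced dominations is what should collapse to $|C|\le|A|-|B|$.

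The main obstacle is precisely the control of arc directions. The ballot condition alone yields only $|A|\ge|B|$ and is satisfied by configurations in which $v$ fails the SNP, so the proof must use the endpoint inequalities at the in-neighbours, not just the condition at $v$. I also expect to have to rule out two tempting shortcuts. First, matching each out-neighbour to an in-neighbour it beats \emph{on its right} can fail outright, since an out-neighbour may beat only later out-neighbours; hence one cannot hope for a saturating matching and must argue about the union $N^{++}(v)$ directly. Second, deleting a matched pair $\{v_k,v_{k+1}\}$ (the last out-neighbour and its successor) and inducting does not work: in a rotational tournament on five vertices the restricted order is no longer a local median order, $v$ ceases to be a feed vertex, and two second out-neighbours can vanish while only one out-neighbour is removed. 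Consequently $|C|\le|A|-|B|$ has to be established globally on $L$, and making the endpoint/reassignment accounting precise is where the real work lies.
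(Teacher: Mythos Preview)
The paper does not prove Theorem~1; it is quoted from Havet and Thomass\'e and used as a black box throughout, so there is no in-paper argument to compare against.

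Measured against the Havet--Thomass\'e proof, your setup is correct: in a tournament $N^{++}(v_n)=A\setminus C$, so the SNP amounts to $|A\setminus C|\ge|B|$; and the three consequences of the feedback property you list (consecutive forward arcs, the suffix ballot inequality at $v_n$, and the prefix inequality $d^{-}_{[1,m]}(v_m)\ge d^{+}_{[1,m]}(v_m)$) are all valid. The non-crossing bracket matching $\lambda:B\to A$ is also exactly the right object, and the ballot condition does guarantee that every $b\in B$ is matched to some later $\lambda(b)\in A$. Where your proposal stalls is the treatment of a matched pair $(v_i,v_j)$ with $v_j\to v_i$: you propose to ``invoke the feedback inequalities on $[i,j]$ \ldots to reassign the match \ldots keeping the assignment injective,'' and then concede that ``making the endpoint/reassignment accounting precise is where the real work lies.'' That step \emph{is} the theorem, so as written this is a plan rather than a proof.

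The point you are missing is that no reassignment is ever needed: for \emph{every} bracket-matched pair $(v_i,v_j)$ one already has $v_j\in N^{++}(v_n)$. The reason is structural. By the bracket property, the open interval $(i,j)$ contains equally many out-neighbours and in-neighbours of $v_n$, so $[i,j-1]$ contains exactly one more out-neighbour than in-neighbour. If $v_j$ dominated every out-neighbour in $[i,j-1]$, then $d^{+}_{[i,j]}(v_j)$ would exceed half of $j-i$, contradicting the second feedback inequality $d^{-}_{[i,j]}(v_j)\ge d^{+}_{[i,j]}(v_j)$. Hence some out-neighbour in $[i,j-1]$ beats $v_j$, and $v_j\in N^{++}(v_n)$. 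This is precisely your ``endpoint domination,'' but applied on the interval $[i,j]$ singled out by the matching rather than on $[1,m]$; once you observe that the bracket structure forces a strict out-neighbour majority on $[i,j-1]$, the argument is one line. The detours through the set $C$, through reassignment, and through the prefix inequality at vertices of $C$ are unnecessary.
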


\end{section}

\begin{section}{Dependency Digraph}

\hskip0.6cm Let $D=(V,E)$ be a digraph. For 2 vertices $x$ and $y$, we call $xy$ a missing edge if $(x,y) \notin E$ and
$(y,x)\notin E$. The missing graph $G$ of $D$ is the graph formed by the missing edges, formally, $E(G)$ is the set
of all the missing edge and $V(G)$ is the set of non whole vertices (vertices incident to some missing edges).
In this case, we say that $D$ is missing $G$.

\par we say that a missing edge $x_1y_1$ loses to a missing edge $x_2y_2$ if:
$x_1\rightarrow x_2$, $y_2\notin N^{+}(x_1)\cup N^{++}(x_1)$, $y_1\rightarrow y_2$ and $x_2\notin N^{+}(y_1)\cup N^{++}(y_1)$. 
The dependency digraph $\Delta$ of $D$ is defined as follows: Its vertex set consists of all the missing
edges and $(ab,cd)\in E(\Delta)$ if $ab$ loses to $cd$. Note that $\Delta$ may contain digons.\\

These digraphs were used in \cite{fidler} to prove SNC for tournaments missing a matching. However,
our defintion is general and is suitable for any digraph.\\

\begin{defn}\cite{a}
A missing edge $ab$ is called \emph{good} if:\\
$(i)$   $(\forall v \in V\backslash\{a,b\})[(v\rightarrow a)\Rightarrow(b\in N^{+}(v)\cup N^{++}(v))]$ or\\
$(ii)$ $(\forall v \in V\backslash\{a,b\})[(v\rightarrow b)\Rightarrow(a\in N^{+}(v)\cup N^{++}(v))]$.\\
If $ab$ satisfies $(i)$ we say that $(a,b)$ is a convenient orientation of $ab$.\\
If $ab$ satisfies $(ii)$ we say that $(b,a)$ is a convenient orientation of $ab$.
\end{defn}

The following holds by the definition of good missing edges and losing relation between them.

\begin{lem}
 Let $D$ be a digraph and let $\Delta$ denote its dependency digraph. A missing edge $ab$ is good
if and only if its in-degree in $\Delta$ is zero.
\end{lem}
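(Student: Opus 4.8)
The plan is to prove both implications, handling the "only if" direction directly and the "if" direction by contraposition. Throughout, recall that an arc $(cd,ab)\in E(\Delta)$ means that the missing edge $cd$ loses to $ab$ for \emph{some} choice of labels, i.e. for some assignment of the endpoints of $\{c,d\}$ to $x_1,y_1$ and of $\{a,b\}$ to $x_2,y_2$ satisfying the four conditions of the losing relation. Thus the in-degree of $ab$ being zero means precisely that no missing edge loses to $ab$ under any labeling.

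For the forward direction, I would suppose $ab$ is good and assume for contradiction that some missing edge loses to $ab$. By the symmetry of the good condition, assume $ab$ satisfies $(i)$, i.e. every in-neighbor $v$ of $a$ has $b\in N^{+}(v)\cup N^{++}(v)$. A missing edge losing to $ab$ assigns $\{a,b\}$ to $\{x_2,y_2\}$ in one of two ways; in either case one checks that the vertex playing the role of the in-neighbor of $a$ (namely $x_1$ if $x_2=a$, or $y_1$ if $y_2=a$) lies in $V\backslash\{a,b\}$ — it cannot equal $a$ (no loops) nor $b$ (else $b\rightarrow a$, contradicting that $ab$ is missing) — arrows into $a$, and fails to have $b$ in its first or second out-neighborhood. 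This directly contradicts $(i)$. The case where $ab$ satisfies $(ii)$ is symmetric, exchanging the roles of $a$ and $b$. Hence no missing edge loses to $ab$, and the in-degree is zero.

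For the converse I would argue the contrapositive: if $ab$ is not good, I will exhibit a missing edge that loses to it. Since $(i)$ fails, there is $u\in V\backslash\{a,b\}$ with $u\rightarrow a$ and $b\notin N^{+}(u)\cup N^{++}(u)$; since $(ii)$ fails, there is $w\in V\backslash\{a,b\}$ with $w\rightarrow b$ and $a\notin N^{+}(w)\cup N^{++}(w)$. Setting $x_1=u$, $y_1=w$, $x_2=a$, $y_2=b$, three of the four losing conditions ($u\rightarrow a$, $b\notin N^{+}(u)\cup N^{++}(u)$, $w\rightarrow b$, $a\notin N^{+}(w)\cup N^{++}(w)$) already hold by construction.

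The main obstacle — and the crux of the argument — is to verify that $uw$ is genuinely a missing edge, so that it is a legitimate vertex of $\Delta$; this is where the second-neighborhood part of the conditions does the work. If $u\rightarrow w$, then from $w\rightarrow b$ we would obtain $b\in N^{+}(u)\cup N^{++}(u)$, contradicting the choice of $u$; if $w\rightarrow u$, then from $u\rightarrow a$ we would obtain $a\in N^{+}(w)\cup N^{++}(w)$, contradicting the choice of $w$. (The same computations also force $u\neq w$.) Thus there is no arc between $u$ and $w$ in either direction, so $uw$ is a missing edge, $uw$ loses to $ab$, and the in-degree of $ab$ is at least one. Combining the two directions yields the equivalence.
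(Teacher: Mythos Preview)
Your argument is correct and is exactly the unwinding of definitions that the paper has in mind; the paper itself gives no detailed proof, merely asserting that the lemma ``holds by the definition of good missing edges and losing relation between them.'' One small slip: where you write ``three of the four losing conditions \dots\ already hold by construction,'' all four in fact hold by construction---the remaining obstacle you then address (that $uw$ is a missing edge) is not one of the four losing conditions but rather the prerequisite that $uw$ be a vertex of $\Delta$.
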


Let $\mathcal{H}$ be a family of digraphs (digons are allowed) and let $G$ be a given graph. We say that $G$ is $\mathcal{H}$-forcing
if the dependency digraph of every digraph missing $G$ is a member of $\mathcal{H}$. The set of all $\mathcal{H}$-forcing graphs
is denoted by $\mathcal{F}(\mathcal{H})$.

A digraph is trivial if it has no arc.

\begin{prop}
 Let $\mathcal{H}$ be a family of digraphs. Then  $\mathcal{F}(\mathcal{H})$ is nonempty if and only if
$\mathcal{H}$ has a trivial digraph.
\end{prop}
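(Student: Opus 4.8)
The plan is to prove both implications by relating triviality of the dependency digraph to the notion of good missing edges. By the Lemma above, a missing edge is good precisely when its in-degree in $\Delta$ is $0$; hence the dependency digraph $\Delta(D)$ of a digraph $D$ is trivial if and only if every missing edge of $D$ is good. Two observations drive the argument. First, two missing edges that share a vertex can never lose to one another: in the definition of the losing relation one of the required arcs $x_1\to x_2$ or $y_1\to y_2$ would then have to join the two endpoints of a missing edge, or force $x_1=x_2$, both impossible. Second, the whole statement reduces to the following core fact, which I will call $(\star)$: \emph{every graph $G$ (with no isolated vertex) admits a digraph $D$ missing $G$ whose dependency digraph is trivial.}

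For the implication ($\Leftarrow$), suppose $\mathcal H$ contains a trivial digraph $T$ on $k$ vertices. I would show that the star $K_{1,k}$ is $\mathcal H$-forcing. Indeed, in any digraph $D$ missing $K_{1,k}$ the missing edges are exactly the $k$ edges joining the centre to the leaves, so they are pairwise incident; by the first observation no missing edge loses to another, and $\Delta(D)$ is the trivial digraph on $k$ vertices, i.e. $\Delta(D)\cong T\in\mathcal H$. Hence $K_{1,k}\in\mathcal F(\mathcal H)$. When $k=0$ one uses instead the edgeless graph, every digraph missing which is a tournament with empty dependency digraph. This gives $\mathcal F(\mathcal H)\neq\varnothing$.

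For the implication ($\Rightarrow$), suppose $G\in\mathcal F(\mathcal H)$. By $(\star)$ there is a digraph $D$ missing $G$ with $\Delta(D)$ trivial; since $G$ is $\mathcal H$-forcing, $\Delta(D)\in\mathcal H$, and it is trivial, so $\mathcal H$ contains a trivial digraph. Everything therefore hinges on establishing $(\star)$. To prove it I would build $D$ on the vertex set $V(G)\sqcup W$, where $W=\{w_v:v\in V(G)\}$ consists of whole vertices (joined by an arc to every other vertex, so that the missing graph of $D$ stays exactly $G$). The non-edges of $G$ inside $V(G)$ are oriented arbitrarily; each gadget vertex $w_v$ is oriented by $v\to w_v$ and $w_v\to u$ for every $u\in V(G)\setminus\{v\}$; and the vertices of $W$ are oriented among themselves so that none is a sink (for instance along a spanning directed cycle). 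The point of the gadgets is that every vertex of $D$ then reaches every vertex of $V(G)$ in at most two steps: a vertex $u\in V(G)$ reaches any $b\neq u$ via $u\to w_u\to b$, while a gadget $w_c$ reaches any $b\neq c$ directly and reaches $b=c$ via $w_c\to w_d\to c$ for any out-neighbour $w_d$ of $w_c$ in $W$. Consequently, for every missing edge $ab$ and every in-neighbour $v$ of $a$ one has $b\in N^+(v)\cup N^{++}(v)$ (note $v\neq b$, since $b\not\to a$), so $(a,b)$ is a convenient orientation and $ab$ is good; by the Lemma, $\Delta(D)$ is trivial.

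The step I expect to be the crux is the construction realizing $(\star)$, and within it the verification that even the gadget vertices reach their far endpoints in two steps — in particular the borderline case $b=c$, where $w_b$ must reach $b$, which is exactly what the no-sink orientation of $W$ secures. A naive transitive orientation of the complement of $G$ on $V(G)$ alone does \emph{not} suffice in general, since a crossing pair of missing edges together with an outer missing edge can create a losing pair; this is why the auxiliary whole vertices, providing uniform two-step reachability into $V(G)$, are introduced.
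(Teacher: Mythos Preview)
Your proof is correct and follows the same overall strategy as the paper: for the ($\Leftarrow$) direction both use a star (you are in fact more careful than the paper in matching the number of leaves to the size of the given trivial digraph), and for ($\Rightarrow$) both reduce to your statement $(\star)$ and prove it by adjoining auxiliary whole vertices so that the dependency digraph becomes trivial. The only real difference is in how $(\star)$ is realised. The paper proceeds locally: start from \emph{any} digraph $D$ missing $G$, pick an arc $xy\to uv$ of $\Delta$, and adjoin a single whole vertex $\alpha$ with $x\to\alpha\to v$; this kills that arc, and since adding whole vertices can only enlarge each $N^{+}\cup N^{++}$, it creates no new arcs in $\Delta$, so finitely many such insertions yield a trivial $\Delta$. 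You instead build one global gadget layer $W=\{w_v\}$ that guarantees uniform two-step reachability into $V(G)$, making every missing edge good at once. The paper's route is shorter and sidesteps the small case analysis on $W$ (in particular your ``no sink in $W$'' check), while your construction is explicit and independent of the starting digraph; both are perfectly valid.
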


\begin{proof}
 Let $G$ be a graph and let $D$ be any digraph missing it. Suppose $xy\rightarrow uv$ in $\Delta$, the dependency digraph of $D$,
namely $v\notin  N^{+}(x)\cup N^{++}(x)$. We add to $D$ an extra whole vertex $\alpha$ such that $x\rightarrow \alpha \rightarrow v$.
This breaks the arc $(xy,uv)$. Hence, by adding a sufficient number of such vertices, one obtains a digraph whose missing graph is
$G$ and such that its dependency digraph is trivial. This establishes the necessary condition.\\ The converse holds, 
by absorving that the dependency digraph of any digraph missing a star ( edges sharing only one endpoint) is trivial. 
\end{proof}

Let $\mathfrak{S}$ denote the class of all trivial digraphs. In \cite{a} Ghazal showed that the only $\mathfrak{S}$-forcing graphs
are generalized stars and proved that every digraph missing a generalized star satisfies Seymour's Second Neighborhood Conjecture.
In fact, a weighted version the following statement is proved in \cite{a}.

\begin{lem}
Let $D$ be a digraph. If all the missing edges of $D$ are good then it has a vertex with the SNP.
\end{lem}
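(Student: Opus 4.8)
The plan is to complete $D$ to a tournament by orienting every missing edge in a convenient orientation, apply Theorem 1 to the resulting tournament, and then transfer the inequality back to $D$. Since every missing edge is good, choose for each one a convenient orientation and add the corresponding arc; this yields a tournament $T$ with $V(T)=V(D)$ and $E(D)\subseteq E(T)$. Let $L=v_1\cdots v_n$ be a median order of $T$ and let $v=v_n$ be its feed vertex, so Theorem 1 gives $d^{+}_T(v)\le d^{++}_T(v)$. Write $A=\{u : vu \text{ is missing and } v\rightarrow u \text{ in } T\}$ for the missing edges at $v$ oriented out of $v$. Since we only added arcs, $N^{+}_T(v)=N^{+}_D(v)\cup A$ disjointly, so $d^{+}_D(v)=d^{+}_T(v)-|A|$.

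Next I would control the second out-neighborhood. The purpose of a convenient arc $a\rightarrow b$ is exactly condition $(i)$: every in-neighbor $w$ of $a$ in $D$ already has $b\in N^{+}_D(w)\cup N^{++}_D(w)$, so such an arc creates no new second out-neighbor when traversed as the \emph{second} arc of a path. Concretely, if $x\in N^{++}_T(v)$ is reached by a path $v\rightarrow y\rightarrow x$ in $T$ whose first arc $v\rightarrow y$ is a genuine arc of $D$, then either $y\rightarrow x$ is an arc of $D$, giving $x\in N^{++}_D(v)$ directly, or $y\rightarrow x$ is an added convenient arc, and condition $(i)$ applied with $w=v$ yields $x\in N^{+}_D(v)\cup N^{++}_D(v)$; in either case, as $x\notin N^{+}_T(v)\supseteq N^{+}_D(v)$, we get $x\in N^{++}_D(v)$. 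Let $S$ be the set of the remaining elements of $N^{++}_T(v)$, i.e. those reachable only through paths whose first arc lies in $A$. Then $S\cap N^{++}_D(v)=\varnothing$, so $d^{++}_T(v)\le d^{++}_D(v)+|S|$, and combining the two steps gives
$$d^{+}_D(v)=d^{+}_T(v)-|A|\le d^{++}_T(v)-|A|\le d^{++}_D(v)+|S|-|A|.$$
Hence it suffices to prove $|S|\le|A|$.

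The inequality $|S|\le|A|$ is the crux, and I expect it to be the main obstacle. The mechanism is already visible when $v$ is a source of $D$: there $N^{++}_D(v)=\varnothing$, each $x\in S$ must be joined to $v$ by a missing edge oriented $x\rightarrow v$ (otherwise $x$ would be a genuine out-neighbor), and the goodness of $vx$ forces, through condition $(ii)$, that every $D$-in-neighbor $w$ of $x$ satisfy $v\in N^{+}_D(w)\cup N^{++}_D(w)$ — impossible for a source — so $x$ receives no genuine arc and is fed only by added arcs, which are tightly tied to the out-arcs in $A$. In general I would build an injection $S\hookrightarrow A$ using the median order: the vertices of $A$ and of $S$ all precede $v$ in $L$, and the feedback property of $L$ on the relevant intervals, together with conditions $(i)$ and $(ii)$ for the missing edges at $v$, should let one match each spurious second out-neighbor to a distinct out-oriented missing edge at $v$ (a Hall-type median-order counting argument, as in the weighted treatment of \cite{a}).

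Granting $|S|\le|A|$, the displayed chain yields $d^{+}_D(v)\le d^{++}_D(v)$, so the feed vertex $v$ has the SNP in $D$, which proves the lemma.
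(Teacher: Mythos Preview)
Your setup and your analysis of length-two paths whose \emph{second} step is an added convenient arc are correct; that is exactly what Definition~1 is designed for. The gap is the inequality $|S|\le|A|$: you identify it as the crux but do not prove it, offering only an intuition in the source case and an appeal to an unspecified Hall-type matching along the median order. Note that the convenience of an arc $(v,y)$ with $y\in A$ constrains the in-neighbours of $v$, not the out-neighbours of $y$, so a single $y\in A$ can a priori feed several vertices of $S$; the injection you hope for is not visible from what you wrote. As it stands, the argument is incomplete.

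The standard fix, used throughout the paper (see the proofs of Theorems~2--4; the paper does not prove Lemma~2 here but quotes it from \cite{a}), sidesteps the comparison entirely: once the feed vertex $v=v_n$ is known, reorient every missing edge incident to $v$ toward $v$. Each such arc $(v,u)$ is backward in $L$, so flipping it to $(u,v)$ only strengthens the feedback inequalities; hence $L$ is still a local median order of the modified tournament $T'$ and $v$ still has the SNP there. Now $A=\varnothing$, so $N^{+}_{T'}(v)=N^{+}_D(v)$, and every length-two $T'$-path out of $v$ has a genuine first arc. Since the missing edges not incident to $v$ keep their convenient orientations, your own paragraph yields $N^{++}_{T'}(v)\subseteq N^{++}_D(v)$, and therefore $d^{+}_D(v)=d^{+}_{T'}(v)\le d^{++}_{T'}(v)\le d^{++}_D(v)$, with no need to compare $|S|$ and $|A|$.
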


\begin{prob}
 Let $\vec{\mathcal{P}}$ be the family of all digraphs composed of vertex disjoint directed paths only. Characterize
 $\mathcal{F}(\vec{\mathcal{P}})$.
\end{prob}

In the next section we present some classes of graphs contained in $\mathcal{F}(\vec{\mathcal{P}})$ and prove that every
digraph missing a member of these classes, satisfies SNC.

\end{section}

\begin{section}{Some Digraphs Missing Graphs of $\mathcal{F}(\vec{\mathcal{P})}$}

A comb G is a graph defined as follows:
\begin{description}
\item[1)] $V(G)$ is disjoint union of three set $A$, $X$ and $Y$.
\item[2)] $G[X\cup Y]$ is a complete graph.
\item[3)] $A$ is stable set with $N(A)=X$ and $N(a)\cap N(b) =\phi$ for any distinct vertices $a,b\in A$.
\item[4)] For every $a\in A$, $d(a)=1$.
\end{description}
Observe that the edges with an end in $A$ form a matching, say M.

\begin{prop}
 Combs are $\vec{\mathcal{P}}$-forcing.
\end{prop}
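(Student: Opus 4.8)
The plan is to exploit the rigidity of the comb to pin down exactly which missing edges can be incident to an arc of the dependency digraph $\Delta$, and then to check that $\Delta$ has in-degree and out-degree at most $1$ at every vertex and contains no directed cycle; a digraph with these three properties is precisely a vertex-disjoint union of directed paths, hence a member of $\vec{\mathcal{P}}$. The structural facts I would record first are immediate consequences of the definition of a comb: since $G[X\cup Y]$ is complete, every pair inside $X\cup Y$ is a missing edge, so $D$ has no arc inside $X\cup Y$ and every arc meeting $X\cup Y$ has its other endpoint in $A$; since $A$ is stable in $G$, $D[A]$ is a tournament; and each $a\in A$ is adjacent in $D$ to every vertex except its matched partner $x_a$. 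Thus the missing edges split into the \emph{matching edges} $\{a,x_a\}$ and the \emph{clique edges} lying inside $X\cup Y$.

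My first step is to show that every clique edge $\{u,v\}$ is an isolated vertex of $\Delta$. Indeed, a losing relation into or out of $\{u,v\}$ would force two arcs, one incident to $u$ and one to $v$; since all arcs at $u,v\in X\cup Y$ go to or from $A$, the two remaining endpoints would both lie in $A$ and would have to form a missing edge, which is impossible because $A$ is stable. Hence every arc of $\Delta$ joins two matching edges, and it remains to analyse $\Delta$ on the matching edges only. The second step is to identify the unique shape of a losing relation between two matching edges $\{a,x_a\}$ and $\{b,x_b\}$: a loss pairs the endpoints of one edge to those of the other by arcs, and the pairing that matches $x_a$ with $x_b$ is forbidden since $x_ax_b$ is a missing edge (no arc); so the only admissible possibility is $a\to x_b$ and $x_a\to b$, together with $b\notin N^{+}(a)\cup N^{++}(a)$ and $x_b\notin N^{+}(x_a)\cup N^{++}(x_a)$. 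In particular such a loss forces $b\to a$ (as $a,b$ are adjacent and $b\notin N^{+}(a)$) and forbids any $2$-path from $a$ to $b$ and from $x_a$ to $x_b$.

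With this normal form in hand, the degree bounds become short cross-cancellations. If $\{a,x_a\}$ lost to both $\{b,x_b\}$ and $\{c,x_c\}$, then $c\notin N^{++}(a)$ together with the arc $a\to x_b$ forces $c\to x_b$, whence $x_a\to c\to x_b$ contradicts $x_b\notin N^{++}(x_a)$; this gives out-degree at most $1$. The in-degree bound is the mirror argument: from two edges losing to $\{a,x_a\}$ one derives $x_b\to c$ using $a\notin N^{++}(c)$ and $x_b\to a$, producing a $2$-path $x_b\to c\to x_a$ against $x_a\notin N^{++}(x_b)$.

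The main obstacle is acyclicity, because the forced arcs $b\to a$ only say that a cycle of $\Delta$ projects to a directed cycle of the tournament $D[A]$, which by itself is not forbidden. The idea that resolves this is to bootstrap the no-$2$-path conditions. Writing a putative cycle as $\{a_1,x_1\}\to\cdots\to\{a_k,x_k\}\to\{a_1,x_1\}$ (indices mod $k$, $x_i:=x_{a_i}$), each arc yields the backward arc $a_{i+1}\to a_i$ and the condition $C_i$ that there is no $2$-path $a_i\to w\to a_{i+1}$. Reading $C_m$ with $w=a_j$ gives the implication $a_j\to a_{m+1}\Rightarrow a_j\to a_m$; starting from the forced arc $a_j\to a_{j-1}$ and descending through $C_{j-1},C_{j-2},\dots,C_1$, I would prove by induction that $a_j\to a_m$ for all $m<j$. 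Taking $j=k$ and $m=1$ yields $a_k\to a_1$, contradicting the arc $a_1\to a_k$ forced by the closing step of the cycle (the case $k=2$ being the excluded digon). This rules out directed cycles, and together with the degree bounds shows $\Delta$ is a disjoint union of directed paths. The delicate point to get right is precisely this descending induction: one must verify that each application of $C_m$ is legitimate, i.e. that $a_j$ is distinct from $a_m$ and $a_{m+1}$ and adjacent to $a_m$ in $D[A]$, which holds since $m<j\le k$ and $A$ is stable in $G$.
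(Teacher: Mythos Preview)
Your proof is correct and follows essentially the same approach as the paper: restrict the arcs of $\Delta$ to the matching edges, use the cross-cancellation via the non-missing edge $a_2x_3$ (your $c,x_b$) to bound in- and out-degree by $1$, and then rule out directed cycles by inductively propagating the forced arcs in $D[A]$ using the ``no $2$-path'' conditions $a_{i+1}\notin N^{++}(a_i)$. One phrasing caveat: the claim that ``all arcs at $u,v\in X\cup Y$ go to or from $A$'' is literally false because of whole vertices, but your conclusion is still valid since the other two endpoints in a losing relation belong to a missing edge and hence to $V(G)=A\cup X\cup Y$, forcing them into $A$ as you say.
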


\begin{proof}

Let $D$ be a digraph missing a comb $G$. We follow the previous notations. The only possible arcs of $\Delta$
occurs between the edges in $M$. For $i=1,2,3$ let $a_ix_i\in M$ with $a_i\in A$ and $x_i\in X$. Suppose
$a_1x_1$ loses to the 2 others. Then we have $a_1\rightarrow x_3$, $x_1\rightarrow a_2$, $a_2\notin N^{++}(a_1)\cup N^{+}(a_1)$
and $x_3\notin N^{++}(x_1)\cup N^{+}(x_1)$. Since $a_2x_3$ is not a missing edge then either $a_2\rightarrow x_3$ or
$a_2\leftarrow x_3$. Whence, either $x_3\in N^{++}(x_1)\cup N^{+}(x_1)$ or $a_2\in N^{++}(a_1)\cup N^{+}(a_1)$. A contradiction.
Therefore, the maximimum out-degree in $\Delta$ is 1. Similarly, the maximum in-degree is 1. Thus $\Delta$ is composed of at most
vertex disjoint directed paths and directed cycles. Now it is enough to prove that it has no directed cycles.
Suppose that $C=a_0x0,a_1x_1,...,a_nx_n$ is a cycle. Then we have $a_{i+1}\notin N^{++}(a_i)$ and $a_i\leftarrow a_{i+1}$ for all
$i< n$. We prove, by induction on $i$, that $a_i\rightarrow a_n$ for all $i<n$. In particular, $a_{n-1}\rightarrow a_n$, a contradiction.
The case $i=1$ holds since $a_nx_n$ loses to $a_1x_1$. Now let $1<i<n$. By induction hypothesis, $a_{i-1}\rightarrow a_n$. 
Since $a_i\notin N^{++}(a_{i-1})$ and $a_ia_n$ is not a missing edge we must have $(a_i,a_n)\in D$.  

\end{proof}

\begin{thm}
 Every digraph missing a comb satisfies SNC.
\end{thm}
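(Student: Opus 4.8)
The plan is to combine the two structural results already established in the excerpt. By the preceding Proposition, every digraph $D$ missing a comb $G$ has a dependency digraph $\Delta$ that is a member of $\vec{\mathcal{P}}$, i.e., $\Delta$ is a vertex-disjoint union of directed paths. By Lemma~2, a missing edge is good precisely when its in-degree in $\Delta$ is zero, and by Lemma~3, if \emph{all} missing edges are good then $D$ already has a vertex with the SNP. So if $\Delta$ were trivial we would be done immediately; the content of the theorem is to handle the case where $\Delta$ has arcs. The natural strategy is to modify $D$ into a new digraph $D'$, still missing (a subgraph of) $G$, in which every missing edge is good, while preserving enough of the out-neighborhood structure that a vertex witnessing the SNP in $D'$ also witnesses it in $D$.

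The key idea I would pursue is orientation of the missing edges guided by $\Delta$. Since $\Delta$ is a disjoint union of directed paths, each path has a source (a vertex of in-degree zero), which by Lemma~2 corresponds to a good missing edge. First I would orient each good missing edge according to its convenient orientation, as supplied by the Definition of good edges: if $ab$ satisfies condition $(i)$ we add the arc $(a,b)$, and if it satisfies $(ii)$ we add $(b,a)$. The whole point of the convenient orientation is that adding it does not increase any vertex's first out-neighborhood relative to its second out-neighborhood in a harmful way — concretely, if $(a,b)$ is convenient then every in-neighbor $v$ of $a$ already reaches $b$ within two steps, so inserting the arc $a\to b$ does not create new ``unreached'' second-neighbors. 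I would then argue that after orienting the source edge of a path, the next edge along the path becomes good in the resulting digraph, so one can orient the missing edges greedily in the path order and eventually fill in all of them, arriving at a tournament-like digraph $D'$ with no missing edges (or missing only a star/independent edges that are all good).

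The main obstacle I expect is the bookkeeping that guarantees two things simultaneously: (a) orienting one missing edge along its convenient orientation does not destroy the goodness already achieved for earlier edges or spoil the path structure of the remaining dependency digraph, and (b) the SNP is genuinely transferred from $D'$ back to $D$. For (a) the delicate point is that the losing relation, and hence $\Delta$, is defined through second out-neighborhoods, which are \emph{not} monotone under adding arcs — adding $a\to b$ can shrink $N^{++}$ of some vertex by promoting a second-neighbor to a first-neighbor, or enlarge it elsewhere. I would therefore need to verify carefully that processing a directed path of $\Delta$ from its source, always using convenient orientations, keeps the invariant ``every already-oriented missing edge remains good'' intact; the disjointness of the paths should localize these interactions. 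For (b), I would show that if $v$ is a feed vertex of a median order of the completed tournament $D'$ obtained after orienting all missing edges — which has the SNP in $D'$ by Theorem~2 — then the convenient orientations were chosen so that $d^+_D(v)\le d^+_{D'}(v)$ and $d^{++}_{D'}(v)\le d^{++}_D(v)$, yielding $d^+_D(v)\le d^{++}_D(v)$. Making this last inequality chain precise, i.e. checking that the deletion of the added convenient arcs moves a vertex from $N^+$ into at worst $N^{++}$ and never out of the second neighborhood entirely, is where the real work lies, and it is essentially the weighted argument underlying Lemma~3 applied edge by edge.
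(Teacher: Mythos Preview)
Your outline has the right shape --- orient missing edges, pass to a tournament, take a feed vertex of a local median order, and pull the SNP back to $D$ --- but two of the steps, as you state them, do not go through, and a key device from the paper is missing entirely. First, the greedy claim that ``after orienting the source edge of a path conveniently, the next edge along the path becomes good'' is not justified and is not what the paper does. The paper never iterates goodness along a path; instead, for each maximal path $a_0x_0,a_1x_1,\ldots$ in $\Delta$ (these paths live only among the matching edges of the comb), it fixes a convenient orientation of the source and then \emph{alternates} the orientation along the path: $(a_{2i},x_{2i})$ and $(x_{2i+1},a_{2i+1})$. After this, $D'$ is missing only the clique $G[X\cup Y]$, all of whose edges are trivially good because they are pairwise adjacent; those are then oriented conveniently and one obtains the tournament $T$. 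This two-stage, comb-specific reduction is something your proposal does not isolate.

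Second, the transfer inequality $d^{++}_{T}(f)\le d^{++}_D(f)$ that you aim for is false in general here: the paper explicitly exhibits cases where the feed vertex $f$ \emph{gains} a second out-neighbor in $T$ that it did not have in $D$ (e.g.\ $f=a_i$ with $(a_i,x_i)\in D'$ and $i<k$ gains $a_{i+1}$). What saves the argument is that in every such case $f$ simultaneously gains exactly one new first out-neighbor, so the balance $d^+\le d^{++}$ is preserved even though neither side is monotone. Finally, a technique you do not mention at all is essential: when $f$ lands in $X$, in $Y$, or at the end of a path, the paper \emph{reorients} the added missing edges incident to $f$ toward $f$; this keeps $L$ a local median order of the modified tournament (reorienting an arc toward the last vertex can only help the feedback inequalities) while stripping those arcs out of $N^+_T(f)$, after which $f$ gains nothing. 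Without this reorientation trick the case analysis does not close.
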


\begin{proof}

 Let $D$ be a digraph missing a comb $G$. We follow the previous notations. Let $P=a_0x_0,a_1x_1,...$ be a maximal
directed path in $\Delta$ ($a_i\in A$ ). By lemma 1, $a_0x_0$ has a convenient orientation. Suppose $(a_0,x_0)$ is a convenient 
orientation. In this case add $(a_{2i},x_{2i})$ and $(x_{2i+1},a_{2i+1})$ to $D$. 
Otherwise, we orient in the reverse direction. We do this for all such paths of $\Delta$. The obtained digraph $D'$
is missing the complete graph $G[X\cup Y]$. Clearly, all the missing edges of $D'$ are good (in $D'$), so we give
each one a convenient orientation and add it to $D'$. The obtained digraph $T$ is a tournament. Let $L$ be a local
median order of $T$ and let $f$ denote its feed vertex. By theorem 1, $f$ has the SNP in $T$. We claim that $f$ has the SNP
in $D$ as well.\\

Suppose $f$ is a whole vertex. We show that $f$ gains no vertex in its second out-neighborhood and hence our claim holds.
Assume $f\rightarrow u\rightarrow v\rightarrow f$ in $T$. Since $f$ is whole, $f\rightarrow u$ in $D$. 
If $u\rightarrow v$ in $D'-D$, then it is either a convenient orientation and hence $v\in N^{++}(f)$ or
there is a missing edge $rs$ that loses to $uv$, namely $s\rightarrow v$ and $u\notin N^{+}(s)\cup N^{++}(s)$.
However, $fs$ is not a missing edge, then we must have $f\rightarrow s$. Whence $v\in N^{++}(f)$. Now, if 
$u\rightarrow v$ in $T-D'$ then $v\in N^{++}_{D'}(f)$. But this case is already discussed. This argument is
used implicitly in the rest of the proof. \\

Suppose $f\in A$. There is a maximal directed path $P=a_0x_0,...,a_ix_i,...,a_kx_k$ with $f=a_i$.
If $(x_i,a_i)\in D'$ then $d^+(f)=d^+_T(f)\leq d^{++}_T(f)=d^{++}(f)$. In fact $f$ gains no new first nor second out-neighbor.
Otherwise $(a_i,x_i)\in D'$. If $i<k$, $f$ gains only $x_i$ (resp. $a_{i+1}$ ) as a first (resp. second ) out-neighbor.
If $i=k$, we reorient $a_kx_k$ as $(x_k,a_k)$. The same order $L$ is also a local median order of $T'$ the modified tournament.
Now $f$ gains no vertex in its second out-neighborhood.\\

Suppose $f\in X$. There is a maximal directed path $P=a_0x_0,...,a_ix_i,...,a_kx_k$ with $f=x_i$.
If $(a_i,x_i)\in D'$ we reorient all the missing edges incident to $x_i$ towards $x_i$. In this case
$f$ gains no new first nor second out-neighbor in the modified tournament.
Otherwise $(x_i,a_i)\in D'$. 
 If $i=k$, we reorient all the missing edges incident to $x_i$ towards $x_i$. In this case
$f$ gains no new first nor second out-neighbor in the modified tournament.
 If $i<k$, we reorient all the missing edges incident to $x_i$ towards $x_i$ except $(x_i,a_i)$.
In this case $f$ gains only $a_{i}$ (resp. $x_{i+1}$ ) as a first (resp. second ) out-neighbor in the modified tournament.\\

Suppose $f\in Y$. Reoreient all the missing edges incident to $y$ towards $y$. In the modified tournament
$f$ gains no vertex in its second out-neighborhood.\\

Therefore $D$ satisfies SNC.

\end{proof}

A $\tilde{K}^4$ is a graph obtained from the complete graph by removing 2 non adjacent edges.
If $xy$ and $uv$ are the removed edges then $\tilde{K}^4$ restricted to $\{x,y,u,v\}$ is a cycle of length 4.

\begin{prop}
The graphs $\tilde{K}^4$ are $\vec{\mathcal{P}}$-forcing.
\end{prop}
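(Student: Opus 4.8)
The plan is to show that for any digraph $D$ missing a $\tilde{K}^4$, its dependency digraph $\Delta$ is a disjoint union of directed paths. Since $\tilde{K}^4$ is the complete graph on four vertices minus two non-adjacent edges, it has exactly two missing edges, say $xy$ and $uv$, and the induced subgraph on $\{x,y,u,v\}$ is a $4$-cycle. Thus $\Delta$ has only two vertices, the missing edges $xy$ and $uv$, and the only conceivable arcs are between these two. So $\Delta$ is a digraph on two vertices, which can only be: the trivial (no-arc) digraph, a single arc, or a digon. To prove combs are forced into $\vec{\mathcal{P}}$ we must rule out the digon, since a single arc or no arc is already a directed path.

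First I would suppose, for contradiction, that $\Delta$ contains a digon, i.e.\ $xy$ loses to $uv$ \emph{and} $uv$ loses to $xy$. Unwinding the definition of the losing relation with the labelling along the $4$-cycle $x,u,y,v$ (so that the present edges of $\tilde{K}^4$ are $xu, uy, yv, vx$ together with the two diagonals), the relation ``$xy$ loses to $uv$'' gives orientations and non-membership conditions among $x,y,u,v$, and likewise ``$uv$ loses to $xy$'' gives a second batch. The key observation is that $\tilde{K}^4$ has only four non-whole vertices, all mutually adjacent in $D$ except along the two missing edges, so every pair among $\{x,y,u,v\}$ other than $\{x,y\}$ and $\{u,v\}$ is joined by an arc of $D$ in one direction or the other. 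This rigidity is exactly what was exploited in the comb proof: a vertex that is ``not in $N^+\cup N^{++}$'' together with the existence of a connecting arc forces a contradictory orientation.

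The main step, then, is the case analysis on the four present arcs $xu, uy, yv, vx$. For each of the two losing relations I would write out the two arc-conditions and the two ``avoidance'' conditions ($y \notin N^+(x)\cup N^{++}(x)$ etc.), and then use the fact that the remaining two edges of the $4$-cycle are present in $D$ (oriented somehow) to derive that one of the avoided vertices actually lies in a first or second out-neighborhood after all. Concretely, suppose $xy$ loses to $uv$: then $x\to u$, $v\notin N^+(x)\cup N^{++}(x)$, $y\to v$, $u\notin N^+(y)\cup N^{++}(y)$. The arc $vx$ is present; if it were oriented $x\to v$ that directly contradicts $v\notin N^+(x)$, so $v\to x$. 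Pushing this through the symmetric conditions coming from ``$uv$ loses to $xy$'' should force a cyclic chain of orientations on the $4$-cycle that is incompatible with one of the second-neighborhood avoidance requirements, giving the contradiction.

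I expect the main obstacle to be purely bookkeeping: keeping the eight simultaneous conditions (four arc directions and four avoidance constraints) consistent and tracking second-out-neighborhoods that may pass through the present edges $xu, uy, yv, vx$. The conceptual content is small --- there are only two missing edges, so $\Delta$ has at most two arcs --- but the avoidance conditions involve $N^{++}$, so one must be careful to check that no length-two path through the four special vertices (or through a whole vertex) sneaks an avoided vertex into a second out-neighborhood. The cleanest route is to fix the orientation of one present edge, say $x\to u$, and observe that the four avoidance conditions together with the orientations of the three remaining present edges over-determine the configuration, so that the digon cannot exist. Once the digon is excluded, $\Delta$ is at most a single arc on two vertices, hence a member of $\vec{\mathcal{P}}$, completing the proof.
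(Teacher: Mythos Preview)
You have misread the definition of $\tilde{K}^4$, and this undermines the whole argument. The graph $\tilde{K}^4$ is itself the \emph{missing graph} of $D$: it is a complete graph $K_n$ (for some $n\ge 4$) with two independent edges deleted. Hence the missing edges of $D$ are precisely the \emph{edges} of $\tilde{K}^4$, that is, all $\binom{n}{2}-2$ pairs of non-whole vertices \emph{except} the two deleted pairs. The pairs you call $xy$ and $uv$ are exactly the two pairs of non-whole vertices that \emph{are} joined by an arc in $D$; they are not missing edges at all, and they are not vertices of $\Delta$. So $\Delta$ has $\binom{n}{2}-2$ vertices, not two, and your plan of ``ruling out the digon on two vertices'' does not address the actual structure.

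The paper's one-line proof rests on the following observation, which you should use instead. If a missing edge $ab$ loses to a missing edge $cd$, then by definition $a\to c$ and $b\to d$ in $D$; in particular $ac$ and $bd$ are \emph{not} missing. But among the non-whole vertices the only non-missing pairs are $\{x,y\}$ and $\{u,v\}$. Hence $\{a,c\}$ and $\{b,d\}$ must coincide with $\{x,y\}$ and $\{u,v\}$ (necessarily one each, since $ab$ is a missing edge). The orientations of the two present arcs $xy$ and $uv$ in $D$ are fixed, so $(a,c)$ and $(b,d)$ are determined, and therefore the ordered pair $(ab,cd)$ is determined. Thus $\Delta$ has at most one arc, which is trivially a member of $\vec{\mathcal{P}}$. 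No digon analysis or second-neighbourhood bookkeeping is needed.
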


\begin{proof}
 This is clear because the dependency digraph can have at most one arc. 
\end{proof}

\begin{thm}
 Every digraph whose missing graph is a $\tilde{K}^4$ satisfies SNC.
\end{thm}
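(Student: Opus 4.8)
The plan is to follow the same strategy that proved the comb and the general good-edge framework: reduce the digraph $D$ to a tournament $T$ by orienting the missing edges, locate a feed vertex $f$ of a local median order of $T$ (which has the SNP in $T$ by Theorem 1), and then argue that $f$ retains the SNP when we pass back to $D$. Let $xy$ and $uv$ be the two removed edges, so the missing graph consists of exactly these two independent edges and the dependency digraph $\Delta$ has two vertices and at most one arc (by the previous Proposition). First I would use Lemma 1: since $\Delta$ has at most one arc, at least one of the two missing edges has in-degree zero in $\Delta$, hence is good and admits a convenient orientation. If $\Delta$ has no arc at all, both edges are good and Lemma 3 immediately gives a vertex with the SNP, so the only substantive case is when $\Delta$ has exactly one arc, say $xy$ loses to $uv$.

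Next I would orient the ``source'' edge $uv$ (the good one, whose in-degree in $\Delta$ is zero) by its convenient orientation and add it to $D$; this leaves a digraph $D'$ missing only the single edge $xy$, which is trivially good in $D'$, so I orient it conveniently as well to obtain a tournament $T$. I then take a local median order $L$ of $T$ and let $f$ be its feed vertex, so $f$ has the SNP in $T$ by Theorem 1. The claim to establish is $d^+_D(f)\le d^{++}_D(f)$. The cleanest organization, mirroring the comb proof, is to break into cases according to the position of $f$ relative to the four special vertices $\{x,y,u,v\}$: either $f$ is whole (incident to no missing edge), or $f\in\{x,y\}$, or $f\in\{u,v\}$. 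In each case the goal is to show that the transition from $T$ to $D$ can only decrease the first out-neighborhood by at least as much as it decreases the second, so that the inequality survives. The whole-vertex case is handled exactly as in the comb theorem: if $f\to a\to b\to f$ in $T$ with $f$ whole, one checks that $b\in N^{++}_D(f)$ by using that $fa$ and $fs$ (for the relevant missing-edge endpoint $s$) are not missing, so $f$ gains nothing spurious in its second out-neighborhood and the SNP is preserved.

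For the cases where $f$ is one of $x,y,u,v$, the main device is reorientation: if $f$ is an endpoint of a missing edge and that edge points \emph{out} of $f$ in $T$, I would reorient it to point \emph{into} $f$, observing (as in the comb proof) that $L$ remains a local median order of the modified tournament and that $f$ then gains no new first out-neighbor from that edge. The delicate point is the single dependency arc: when $xy$ loses to $uv$, reorienting one edge might interact with the convenient orientation of the other through the losing relation, so one must check that reorienting the edge incident to $f$ does not force a vertex into $N^+_D(f)$ without a compensating gain in $N^{++}_D(f)$. I expect this interaction to be the main obstacle, but since $\Delta$ has only one arc (hence no chains of dependencies as in the comb's paths), the bookkeeping is strictly simpler than in the comb case: there is at most one ``downstream'' edge to track, and a direct case analysis on whether $f\in\{x,y\}$ or $f\in\{u,v\}$, together with whether the relevant missing edge already points into $f$, should close every subcase. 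In each subcase $f$ gains at most one first out-neighbor only when it simultaneously gains a corresponding second out-neighbor, so the inequality $d^+_D(f)\le d^{++}_D(f)$ follows, and therefore $D$ satisfies SNC.
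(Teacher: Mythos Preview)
Your proposal rests on a misreading of the object $\tilde{K}^4$. In the paper, $\tilde{K}^4$ is \emph{not} the graph consisting of two independent edges; it is a complete graph (on some $n\ge 4$ vertices) from which two non-adjacent edges have been deleted. Thus when $D$ is ``missing a $\tilde{K}^4$'', its missing graph has $\binom{n}{2}-2$ edges, and the dependency digraph $\Delta$ has that many vertices, not two. (Note also that a digraph whose missing graph is two independent edges is already a tournament minus a matching, settled by Fidler--Yuster; the present theorem is a genuinely new case.) Your step ``orient the source edge \dots this leaves a digraph $D'$ missing only the single edge $xy$'' is therefore false: after orienting the two edges of the induced $4$-cycle that participate in the unique arc of $\Delta$, there remain all the other missing edges of $\tilde{K}^4$ to deal with. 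A secondary slip: if $xy$ loses to $uv$, the arc of $\Delta$ is $(xy,uv)$, so the edge with in-degree zero in $\Delta$ --- the good one --- is $xy$, not $uv$.

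The paper's actual argument is close in spirit to what you sketch but accounts for all of this. With the labelling chosen so that the unique arc of $\Delta$ is $xy\to uv$ (hence $x\to u$, $y\to v$ in $D$), the four vertices $x,y,u,v$ induce a $4$-cycle in the missing graph. One orients $xy$ by its convenient orientation (it is good), then adds $(u,v)$; \emph{all remaining} missing edges of $\tilde{K}^4$ are good, so each receives a convenient orientation, yielding a tournament $T$ with feed vertex $f$. The case analysis then has three branches, not two: (i) $f$ whole (handled as you describe); (ii) $f=x$, where one reorients every missing edge at $x$ towards $x$ \emph{except} $(x,y)$, gaining exactly $y$ as a new out-neighbour and $v$ as a new second out-neighbour; and (iii) $f\in\{y,u,v\}$ \emph{or $f$ is a non-whole vertex off the $4$-cycle}, where one reorients all missing edges at $f$ towards $f$ and checks that $f$ gains nothing. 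Case (iii) for non-whole $f\notin\{x,y,u,v\}$ is entirely absent from your plan because you believed no such vertices exist; it is essential, and it is where the ``convenient orientation'' of the many extra missing edges does its work.
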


\begin{proof}
 Let $D$ be a digraph missing a $\tilde{K}^4$. If $\Delta$ has no arc then $D$
satisfies SNC by lemma 2. Otherwise, it has exactly one arc, say $xy\rightarrow uv$
with $x\rightarrow u$ and $v\notin N^{++}(v)$. Note that the cycle $C=xyuv$ is an induced cycle
in the missing graph. We may suppose that $(x,y)$ is a convenient orientation. Add $(x,y)$
and $(u,v)$ to $D$. The rest of the missing edges are good missing edges. So we give them a
convenient orientation and add to $D$. The obtained digraph $T$ is a tournament. Let $L$
be a local median order of $T$ and let $f$ denote its feed vertex. Now $f$ has the SNP in $T$. We discuss according to $f$.\\

Suppose $f$ is a whole vertex. Then $f$ gains no vertex in its second out-neighborhood.\\

Suppose $f=x$. Reorient all the missing edges incident to $x$ towards $x$ except $(x,y)$.
The same order $L$ is a local median order of the modified tournament $T'$. The only new first (resp. second) out-neighbor
of $f$ is $y$ (resp. $v$).\\

Suppose $f=y$, $u$, $v$ or a non whole vertex that does not belong $C$. Reorient all the missing edges incident to $f$ towards $f$. 
In the modified tournament, $f$ gains no vertex in its second out-neighborhood.\\

\end{proof}

A $\tilde{K}^5$ is a graph obtained from the complete graph by removing a cycle of length 5.
Note that $\tilde{K}^5$ restricted to the vertices of the removed cycle is also a cycle of
length 5.\\

In the following $ab\rightarrow cd$ means $ab$ loses to $cd$, namely, $a\rightarrow c$ and $b\rightarrow d$ (the order of the endpoints is considered).
Let $D$ be a digraph missing $\tilde{K}^5$ and let $\Delta$ denote its dependency digraph. Let $C=xyzuv$ be the induced cyle
of length 5 in $\tilde{K}^5$. Checking by cases, we find that $\Delta$ has at most 3 arcs.
If $\Delta$ has exactly 3 arcs then its arcs are (isomorphic to) $uv\rightarrow xy\rightarrow zu\rightarrow vx$ or
$uv\rightarrow xy\rightarrow zu$ and $xv\rightarrow zy$.\\
If $\Delta$ has exactly 2 arcs then they are (isomorphic to) $uv\rightarrow xy\rightarrow zu$ or $uv\rightarrow xy$ and $vx\rightarrow yz$.\\
If $\Delta$ has exactly 1 arc then it is (isomorphic to) $uv\rightarrow xy$. So we have the following.

\begin{prop}
The graphs $\tilde{K}^5$ are $\vec{\mathcal{P}}$-forcing.
\end{prop}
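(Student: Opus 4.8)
The plan is to prove that every digraph missing $\tilde{K}^5$ has a dependency digraph $\Delta$ belonging to $\vec{\mathcal{P}}$, i.e. a disjoint union of directed paths. The preceding discussion has already enumerated all the possible arc configurations of $\Delta$: at most three arcs, falling into the listed isomorphism types. So my task reduces to checking that, in each of these configurations, $\Delta$ contains neither a digon nor a vertex of in- or out-degree $2$, and contains no directed cycle; equivalently, that each listed configuration is literally a disjoint union of directed paths on the five missing-edge vertices.

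First I would list the vertex set of $\Delta$: it consists of the five missing edges of $C=xyzuv$, namely $xy$, $yz$, $zu$, $uv$, $vx$. Then I would simply read off each configuration from the case analysis preceding the proposition and verify it is a union of paths. For the $3$-arc cases: the chain $uv\rightarrow xy\rightarrow zu\rightarrow vx$ is a directed path on four of the five edge-vertices (with $yz$ isolated), hence in $\vec{\mathcal{P}}$; the configuration $uv\rightarrow xy\rightarrow zu$ together with $xv\rightarrow zy$ is a directed $P_3$ on $\{uv,xy,zu\}$ plus a directed $P_2$ on $\{xv,zy\}$, again a disjoint union of paths. For the $2$-arc cases: $uv\rightarrow xy\rightarrow zu$ is a single path, and $uv\rightarrow xy$ with $vx\rightarrow yz$ is two disjoint edges. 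The single-arc case $uv\rightarrow xy$ and the zero-arc case are trivially in $\vec{\mathcal{P}}$. Since these exhaust all possibilities up to isomorphism, $\Delta$ is always a disjoint union of directed paths, which is exactly the statement.

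The genuinely substantive content is the case analysis that bounds $\Delta$ and classifies its configurations, and that work is asserted in the text immediately before the proposition (``Checking by cases, we find that $\Delta$ has at most $3$ arcs $\ldots$''). Given that classification, the proposition is an immediate corollary, so the proof I would write is a single sentence pointing to the preceding enumeration and observing that every listed possibility is a union of vertex-disjoint directed paths. The main obstacle, therefore, is not in the proposition's proof itself but lives upstream in justifying the exhaustive case analysis: one must verify, for an arbitrary orientation of the whole vertices and of the edges of $G[\{x,y,z,u,v\}]$ other than those of $C$, that no fourth losing-arc can appear and that the arcs that do appear cannot combine into a digon, a branching vertex, or a cycle. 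I would expect the antisymmetry of the losing relation on a $5$-cycle (driven by the fact that any chord $a_ia_j$ between two missing edges is a genuine arc forcing one of the ``not in $N^+\cup N^{++}$'' conditions to fail, exactly as in the comb and $\tilde{K}^4$ arguments) to be the recurring mechanism that rules out the forbidden subconfigurations.

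\begin{proof}
By the preceding case analysis, $\Delta$ has at most three arcs and, up to isomorphism, is one of the configurations listed above. In each of these configurations $\Delta$ is a disjoint union of vertex-disjoint directed paths: the three-arc cases are a directed path on four of the five missing edges (with the fifth isolated), respectively a directed path on three missing edges together with a disjoint directed path on two; the two-arc cases are a single directed path on three edges, respectively two disjoint directed paths on two edges each; and the one-arc and zero-arc cases are trivially unions of paths. Hence $\Delta\in\vec{\mathcal{P}}$ for every digraph $D$ missing $\tilde{K}^5$, so $\tilde{K}^5$ is $\vec{\mathcal{P}}$-forcing.
\end{proof}
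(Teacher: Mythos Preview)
Your proposal is correct and matches the paper's approach exactly: the paper performs the case analysis in the text preceding the proposition and then states the proposition as an immediate consequence (with no separate proof environment), and you do the same, merely spelling out that each listed configuration is a disjoint union of directed paths. There is nothing to add or correct.
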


\begin{thm}
 Every digraph whose missing graph is a $\tilde{K}^5$ satisfies SNC.
\end{thm}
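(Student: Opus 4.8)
The plan is to follow exactly the template established in the proofs for combs and $\tilde{K}^4$, adapting it to the richer case analysis that the $\tilde{K}^5$ dependency digraph forces. The overall strategy is: convert $D$ into a tournament $T$ by orienting all missing edges, extract a local median order $L$ with feed vertex $f$ (which has the SNP in $T$ by Theorem 1), and then argue that $f$ retains the SNP back in $D$ by showing that—after a suitable local re-orientation of the missing edges incident to $f$—the vertex $f$ gains no new second out-neighbor, or gains at most as many second out-neighbors as first out-neighbors.

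First I would orient the missing edges. Since $\tilde{K}^5$ is $\vec{\mathcal{P}}$-forcing (Proposition 5), the dependency digraph $\Delta$ is a disjoint union of directed paths. Using Lemma 1, the source of each maximal path is a good missing edge and hence has a convenient orientation; propagating along each path (alternating the orientation exactly as in the comb proof, where the losing relation $ab\rightarrow cd$ guarantees that orienting $ab$ conveniently makes $cd$ good in the resulting partial digraph) yields convenient orientations for every missing edge. Adding all these oriented edges to $D$ produces a tournament $T$. I would then invoke the explicit enumeration preceding Proposition 5: the arcs of $\Delta$ form one of the listed configurations (at most three arcs, isomorphic to $uv\rightarrow xy\rightarrow zu\rightarrow vx$, or $uv\rightarrow xy\rightarrow zu$ together with $xv\rightarrow zy$, etc.). This enumeration is what lets the argument stay finite.

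Next I would take a local median order $L=v_1\dots v_n$ of $T$ with feed vertex $f=v_n$, so $f$ has the SNP in $T$ by Theorem 1. The core claim is that $f$ has the SNP in $D$. As in the earlier proofs, if $f$ is a whole vertex then $f$ gains no new second out-neighbor: any path $f\rightarrow u\rightarrow v\rightarrow f$ in $T$ through an added arc $u\rightarrow v$ is either a convenient orientation (so $v\in N^{++}(f)$ already) or arises from a missing edge $rs$ losing to $uv$, and since $fs$ is not missing we get $f\rightarrow s\rightarrow v$, again placing $v\in N^{++}_D(f)$. The remaining work is the case $f\in V(C)=\{x,y,z,u,v\}$ (or $f$ a non-whole vertex off $C$). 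For each such $f$ I would re-orient all missing edges incident to $f$ toward $f$, with at most one exception determined by the single outgoing $\Delta$-arc (if any) whose tail is the missing edge containing $f$ in its convenient orientation—mirroring the $\tilde{K}^4$ case where $f=x$ keeps $(x,y)$ and gains only $y$ as a first and $v$ as a second out-neighbor. I must check that reversing these edges keeps $L$ a local median order (it does, since turning out-arcs of the last vertex into in-arcs only increases the feedback quantity at $f$) and then count the at most one new first out-neighbor against the at most one forced new second out-neighbor.

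The main obstacle will be the bookkeeping in the three-arc configurations. Unlike $\tilde{K}^4$, here a single vertex of $C$ may be an endpoint of two distinct missing edges that participate in the path structure of $\Delta$ (for instance in $uv\rightarrow xy\rightarrow zu\rightarrow vx$ the vertices $x$, $u$, $v$ each lie on two missing edges), so when $f$ is such a vertex I cannot freely reverse all incident missing edges without possibly destroying a convenient orientation I relied on elsewhere. I expect to handle this by treating each of the (finitely many, up to isomorphism) configurations separately and, for each position of $f$ within it, exhibiting the explicit re-orientation and verifying the inequality $d^+(f)\le d^{++}(f)$ directly—exactly the style of exhaustive-but-short case checking used in the $\tilde{K}^4$ theorem. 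The key technical point to keep verifying in every case is that whenever $f$ is forced to keep one outgoing missing arc $(f,w)$, the unique second out-neighbor this creates is accounted for, and no additional second out-neighbors sneak in through the whole-vertex argument applied to the other (now inward-oriented) missing edges at $f$.
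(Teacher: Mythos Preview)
Your plan is correct and mirrors the paper's proof almost exactly: orient the missing edges by choosing a convenient orientation for the good (source) edge of each maximal $\Delta$-path and propagating along the path, take a feed vertex $f$ of a local median order of the resulting tournament, and then run a finite case analysis over the enumerated $\Delta$-configurations and the position of $f$, reorienting missing edges toward $f$ as needed. Two small corrections you will discover when you execute the cases: the propagation along a $\Delta$-path here is not the comb-style ``alternation'' but rather the same-direction rule (from $(u,v)$ convenient and $uv\rightarrow xy$ you add $(x,y)$, since $u\to v\to y$ now places $y\in N^{++}(u)$), and in one sub-case ($f=x$ in the configuration $uv\rightarrow xy\rightarrow zu$, $xv\rightarrow zy$ with $(x,v)$ convenient) $f$ actually gains \emph{two} new first out-neighbors, so your ``at most one'' heuristic must give way to the direct per-case count you already anticipate.
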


\begin{proof}
 Let $D$ be a digraph missing a $\tilde{K}^5$. Let $C=xyzuv$ be the induced cyle
of length 5 in $\tilde{K}^5$. If $\Delta$ has no arcs then $D$ satisfies SNC by lemma 2.\\

Suppose $\Delta$ has exactly one arc $uv\rightarrow xy$. Without loss of generality, we may assume that $(u,v)$
is a convenient orientation. Add $(u,v)$ and $(x,y)$ to $D$. We give the rest of the missing edges (they are good)
convenient orientatins and them add to $D$. Let $L$ be a local median order of the obtained tournament $T$ and let $f$ denote
its feed vertex. $f$ has the SNP in $T$. If $f=u$ the only new first (resp. second) out-neighbor of $u$ is $v$ (resp. $y$). 
Whence $f$ has the SNP in $D$. Otherwise, we reorient all the missing edges incident to $f$ towards $f$, if any exist.
The same order $L$ is a local median order of the new tournament $T'$ and $f$ has the SNP in $T'$. However, 
$f$ gains neither a new first out-neighbor nor a new second out-neighbor. So $f$ has the SNP in $D$.\\

Suppose $\Delta$ has exactly 2 arcs, say $uv\rightarrow xy$ and $vx\rightarrow yz$. We may assume
that $(u,v)$ is a convenient orientation. Add $(u,v)$ and $(x,y)$ to $D$. If $(v,x)$ is a convenient
orientation, we add $(v,x)$ and $(y,z)$ to $D$, otherwise we add their reverse. We give the rest of the missing edges (they are good)
convenient orientatins and add them to $D$. Let $L$ be a local median order of the obtained tournament $H$ and let $f$ denote
its feed vertex. We reorient every missing edge incident to $f$, whose other endpoint is not in $\{u,v,x\}$, towards $f$ if any exists . 
The same $L$ is a local median order of the new tournament $T$ and $f$ has the SNP in $T$.

If $f\notin \{u,v,x\}$ then it gains neither a new first out-neighbor nor a new second out-neighbor. So $f$ has the SNP in $D$.
 
If $f=u$, then the only new first (resp. second ) out-neighbor of $f$ is $v$ (resp. $y$), whence $f$ has the SNP in $D$.
 
If $f=v$ either $v\rightarrow x$ in $T$ and in this case the only new first (resp. second ) out-neighbor of $v$ is $x$ (resp. $z$) 
or $x\rightarrow v$ and in this case $f$ gains neither a new first out-neighbor nor a new second out-neighbor. Whence $f$ has the SNP in $D$.

If $f=x$ we reorient $xy$ as $(y,x)$. The same $L$ is a local median order of the new tournament $T'$ and $f$ has the SNP in $T'$.
If $v\rightarrow x$ in $T'$ then $f$ gains neither a new first out-neighbor nor a new second out-neighbor. Otherwis, $x\rightarrow v$
in $T'$ then the only new first (resp. second ) out-neighbor of $f$ is $v$ (resp. $y$). Whence $f$ has the SNP in $D$.\\

Suppose $\Delta$ has exactly 2 arcs with $uv\rightarrow xy\rightarrow zu$.  We may assume
that $(u,v)$ is a convenient orientation. Add $(u,v)$, $(x,y)$ and $(z,u)$ to $D$. We give the rest of the missing edges (they are good)
convenient orientatins and add them to $D$. Let $L$ be a local median order of the obtained tournament $H$ and let $f$ denote
its feed vertex. We reorient every missing edge incident to $f$, whose other endpoint is not in $\{u,v,x,y,z\}$, towards $f$ if any exists . 
The same $L$ is a local median order of the new tournament $T$ and $f$ has the SNP in $T$. 

If $f\notin \{u,v,x,y,z\}$ then it gains neither a new first out-neighbor nor a new second out-neighbor. So $f$ has the SNP in $D$.

If $f=u$, then the only new first (resp. second ) out-neighbor of $f$ is $v$ (resp. $y$), whence $f$ has the SNP in $D$.

If $f=v$ we orient $xv$ as $(x,v)$. The same $L$ is a local median order of the new tournament $T'$ and $f$ has the SNP in $T'$.

If $f=x$ we orient $xv$ as $(v,x)$. The same $L$ is a local median order of the new tournament $T'$ and $f$ has the SNP in $T'$.
The only new first (resp. second) out-neighbor of $f$ is $y$ (resp. $u$). Whence $f$ has the SNP in $D$.

If $f=y$ we orient $yz$  as $(z,y)$. The same $L$ is a local median order of the new tournament $T'$ and $f$ has the SNP in $T'$.
$f$ gains neither a new first out-neighbor nor a new second out-neighbor. So $f$ has the SNP in $D$.

If $f=z$ we orient $yz$ and $zu$ towards $z$. The same $L$ is a local median order of the new tournament $T'$ and $f$ has the SNP in $T'$.
$f$ gains neither a new first out-neighbor nor a new second out-neighbor. So $f$ has the SNP in $D$.\\

Suppose $\Delta$ has exactly 3 arcs with $uv\rightarrow xy\rightarrow zu\rightarrow vx$. We may assume
that $(u,v)$ is a convenient orientation. Add $(u,v)$, $(x,y)$, $(z,u)$ and $(v,x)$ to $D$. We give the rest of the missing edges (they are good)
convenient orientatins and then add to $D$. Let $L$ be a local median order of the obtained tournament $H$ and let $f$ denote
its feeed vertex. We reorient every missing edge incident to $f$, whose other endpoint is not in $\{u,v,x,y,z\}$, towards $f$ if any exists . 
The same $L$ is a local median order of the new tournament $T$ and $f$ has the SNP in $T$. 

If $f\notin \{u,v,x,y,z\}$ then it gains neither a new first out-neighbor nor a new second out-neighbor. So $f$ has the SNP in $D$.

If $f=u$, then the only new first (resp. second ) out-neighbor of $f$ is $v$ (resp. $y$), whence $f$ has the SNP in $D$.

If $f=v$ we orient $xv$ as $(x,v)$. The same $L$ is a local median order of the new tournament $T'$ and $f$ has the SNP in $T'$.
In this case $f$ gains neither a new first out-neighbor nor a new second out-neighbor. So $f$ has the SNP in $D$.

If $f=x$, the only new first (resp. second ) out-neighbor of $f$ is $y$ (resp. $u$). Whence $f$ has the SNP in $D$.

If $f=y$ we orient $yz$ as $(z,y)$. The same $L$ is a local median order of the new tournament $T'$ and $f$ has the SNP in $T'$.
$f$ gains neither a new first out-neighbor nor a new second out-neighbor. So $f$ has the SNP in $D$.

If $f=z$ we orient $yz$ towards $z$. The same $L$ is a local median order of the new tournament $T'$ and $f$ has the SNP in $T'$.
The only new first (resp. second) out-neighbor of $f$ is $u$ (resp. $x$). Whence $f$ has the SNP in $D$.\\

Finally, suppose $\Delta$ has exactly 3 arcs with $uv\rightarrow xy\rightarrow zu$ and $xv\rightarrow zy$. We may assume
that $(u,v)$ is a convenient orientation. Add $(u,v)$, $(x,y)$ and $(z,u)$ to $D$. Note that $xv$ is a good missing edge.
If $(x,v)$ is a convenient orientation add it with $(z,y)$, otherwise we add the reverse of these arcs. We give the rest 
of the missing edges (they are good) convenient orientatins and add them to $D$. Let $L$ be a local median order of the 
obtained tournament $H$ and let $f$ denote its feeed vertex. We reorient every missing edge incident to $f$, whose other 
endpoint is not in $\{u,v,x,y,z\}$, towards $f$ if any exists. The same $L$ is a local median order of the new tournament 
$T$ and $f$ has the SNP in $T$. 

If $f\notin \{u,v,x,y,z\}$ then it gains neither a new first out-neighbor nor a new second out-neighbor. So $f$ has the SNP in $D$.

If $f=u$, then the only new first (resp. second ) out-neighbor of $f$ is $v$ (resp. $y$), whence $f$ has the SNP in $D$.

If $f=v$ either $x\rightarrow f=v$ in $T$ and in this case it gains neither a new first out-neighbor nor a new second out-neighbor
or $f=v\rightarrow x$ and in this case the only new first (resp. second ) out-neighbor of $f$ is $x$ (resp. $z$). Whence $f$ has the SNP in $D$.

If $f=x$ either $v\rightarrow f=x$ in $T$ and in this case the only new first (resp. second ) out-neighbor of $f$ is $y$ (resp. $z$)
or $f=x\rightarrow v$ and in this case the only new first (resp. second ) out-neighbor of $f$ are $y$ and $v$ (resp. $z$). 
Whence $f$ has the SNP in $D$.

If $f=y$ or $z$, we orient the missing edges incident to $f$ towards $f$. The same $L$ is a local median order of the new tournament 
$T'$ and $f$ has the SNP in $T'$. $f$ gains neither a new first out-neighbor nor a new second out-neighbor. So $f$ has the SNP in $D$.

\end{proof}

Digraphs missing a matching are the digraphs with minimum degree $|V(D)|-2$. These digraphs satisfies SNC \cite{fidler}.
A more general class of digraphs is the class of digraphs with minimum degree at least $|V(D)|-3$. The
missing graph of such a digraph is composed of vertex disjoint directed paths and directed cycles.
$P_3$ is the path of length 3 and $C_3$, $C_4$ and $C_5$ are the cycles of length 3, 4 and 5 respectively.
Theorems 2, 3 and 4 implies the following.

\begin{cor}
 Every digraph whose missing graph is $P_3$, $C_3$, $C_4$ or a $C_5$ satisfies SNC.  
\end{cor}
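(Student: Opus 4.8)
The plan is to recognize each of the four graphs $P_3$, $C_3$, $C_4$, $C_5$ as (isomorphic to) a member of one of the three families already handled---combs, $\tilde{K}^4$'s, or $\tilde{K}^5$'s---and then to quote the corresponding theorem. No genuinely new argument is needed: the entire content is an identification of each small graph with the right family, after which Theorems 2, 3 and 4 discharge the four cases.

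First I would dispose of the two comb cases. Since $C_3 = K_3$ is complete, it is a comb with $A = \emptyset$ and $Y = V(C_3)$, for then the conditions imposed on $A$ hold vacuously and $G[X \cup Y] = G[Y]$ is complete; hence Theorem 2 applies to any digraph missing $C_3$. For the path $P_3 = v_0 v_1 v_2 v_3$ of length three, I would exhibit the comb decomposition $A = \{v_0, v_3\}$, $X = \{v_1, v_2\}$, $Y = \emptyset$. One checks that $A$ is stable, that $N(A) = \{v_1, v_2\} = X$, that $G[X \cup Y]$ reduces to the single edge $v_1 v_2$ and is therefore complete, and that each leaf has degree one with $N(v_0) = \{v_1\}$ and $N(v_3) = \{v_2\}$ disjoint---so the two pendant edges indeed form a matching. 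Thus $P_3$ is a comb and Theorem 2 again applies.

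For $C_4$ and $C_5$ I would appeal to the structural remarks recorded when those families were introduced. Deleting two non-adjacent edges from $K_4$ leaves exactly a $4$-cycle on their four endpoints, so $\tilde{K}^4 \cong C_4$ as abstract graphs; a digraph missing $C_4$ is then a digraph missing a $\tilde{K}^4$, and Theorem 3 applies. Similarly, removing the edges of a $5$-cycle from $K_5$ leaves a second $5$-cycle on the same vertices, so $\tilde{K}^5 \cong C_5$; a digraph missing $C_5$ is a digraph missing a $\tilde{K}^5$, and Theorem 4 applies. The only step demanding any care is the comb decomposition of $P_3$---specifically the verification that the two leaves have disjoint neighborhoods, which is precisely what the matching condition in the definition of a comb requires; everything else is routine identification, so I expect no real obstacle.
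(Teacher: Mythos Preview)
Your proposal is correct and matches the paper's intent exactly: the paper offers no separate proof but simply states that the corollary follows from Theorems~2, 3 and 4, which is precisely the identification you carry out. Your explicit comb decomposition of $P_3$ and the observation that $K_3$ is a (degenerate) comb with $A=X=\emptyset$, together with the remarks that $C_4$ and $C_5$ arise as the $n=4$ and $n=5$ instances of $\tilde{K}^4$ and $\tilde{K}^5$, are exactly the verifications needed.
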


\end{section}


\begin{thebibliography}{2}


\bibitem[1]{a} S. Ghazal, \emph{Seymour's second neighborhood conjecture in tournaments missing a generalized star}, J. Graph Theory 
	       ( accepted, 24 June 2011).
\bibitem[2]{dean}N. Dean and B. J. Latka, \emph{squaring the tournament: an open problem}, Congress Numerantium 109 (1995), 73-80.
\bibitem[3]{m.o.}F. Havet and S. Thomass\'{e}, \emph{Median Orders of Tournaments: A Tool for the Second Neighborhood
                 Problem and Sumner's Conjecture}, J. Graph Theory 35 (2000), 244-256.
\bibitem[4]{fidler}D. Fidler and R. Yuster, \emph{Remarks on the second neighborhood problem}, J. Graph Theory 55 (2007), 208-220.
\bibitem[5]{fisher}D. Fisher, \emph{squaring a tournament: a proof of Dean's conjecture}, J. Graph Theory 23 (1996), 43-48.
\end{thebibliography}
\end{document}